\newcommand{\sign}{\text{ sign}}
\newcommand{\beqn}{\begin{equation}}
\newcommand{\eeqn}{\end{equation}}
\newcommand{\bear}{\begin{eqnarray}}
\newcommand{\eear}{\end{eqnarray}}
\newcommand{\bean}{\begin{eqnarray*}}
\newcommand{\eean}{\end{eqnarray*}}
\newcommand{\bal}{\begin{aligned}}
\newcommand{\eal}{\end{aligned}}
\numberwithin{equation}{section}
\setlist[enumerate]{wide,labelindent=0cm,label=\textnormal{(\arabic*)},itemsep=5pt,topsep=4pt}
\theoremstyle{plain}
\newtheorem{theo}{Theorem}[section]
\newtheorem{prop}[theo]{Proposition}
\newtheorem{coro}[theo]{Corollary}
\theoremstyle{remark}
\theoremstyle{definition}
\title[Averaging method for polynomial dynamical systems]{The Krylov-Bogoliuvob-Mitropolsky averaging method for polynomial dynamical systems}
\author[F. Alvarez]{Frank Alvarez Borges}
\address[Frank Alvarez Borges]{Laboratoire Jacques-Louis Lions (LJLL),
  Sorbonne Universit\'e UPMC, 4 Place Jussieu, 75005 Paris 5, France}
\email{frank.alvarez$\_$borges@sorbonne-universite.fr}
\author[M. Rodriguez-Ricard]{Mariano Rodriguez-Ricard}
\address[Mariano Rodriguez-Ricard]{Facultad de Matématica y Ciencias de la Computaci\'on, Universidad de La Habana, Neptuno y San Lazaro, La Habana, Cuba}
\email{rricard@matcom.uh.cu}
\date{\today}
\keywords{Limit cycles, Averaging methods}
\subjclass[2020]{34C07, 34C29, 37G15, 34E10}
\begin{document}

\begin{abstract}
We describe the transformation of a polynomial planar dynamical system into a second order differential equation by means of a polynomial change of variables. We then, by means of the Krylov-Bogoliubov-Mitropolsky averaging method, identify sufficient conditions involving said change of variables so that a limit cycle exists.   %{\Green with polynomial weights.}

\end{abstract}

\maketitle

\tableofcontents

%%%%%%%%%%%%%%%%%%%%%%%%%
\section*{Introduction}
The second part of Hilbert's sixteenth problem \cite{hilbert2019mathematical} is concerned with the number and relative position of the limit cycles of polynomial planar dynamical system. Said problem, together with the Riemann hypothesis and the solution of the 7th-degree equation using algebraic functions of two parameters are the only ones still classified as unresolved, out of the 23 original problems. This alone, serves as indication of the complexity of said problem. However, as is often the case, not being a {\it solved } problem, does not imply that a considerable amount of advances has been made during the time since its statement.\\
There are many results and methods around the existence (or non-existence) of limit cycles, as well as their number and position. Some examples are the Poincaré-Bendixson theorem \cite{bendixson1901courbes} and the  Bendixson-Dulac theorem \cite{dumortier2006qualitative} which are of qualitative nature, and the Lindtsedt-Poincaré method \cite{mickens1996oscillations} which is of quantitative nature.\\
Averaging methods, such as the one developed by Krylov, Bogoliubov and Mitropolsky (KBM for short) \cite{mickens1996oscillations}, are widely used in the literature on the study of limit cycles for differential equations. Said methods allow for the analysis of existence of the periodic solutions, together with their amount, distribution, stability and approximate expression. Some examples of their application are \cite{buica2007periodic,buicua2004averaging,chow2012methods,marsden2012hopf,sanders2007averaging,verhulst2012nonlinear} where the issue of existence is addressed, and \cite{li2003hilbert,pi2009limit} where the number, distribution and stability is studied. At its core, the KBM method approximates solutions of perturbations of second order differential equations by computing as many Fourier coefficients of the solution as necessary in order to obtain the desired order of approximation. Since every first order differential system of two equations has second order equivalent equation, the KBM can be also be applied to said systems, after an appropriate change of variables.\\
In Section \ref{CoV} we show how to construct such a change of variables $\mathcal{H}(X)$, after showing that it always exists, independently of the degree of the non-linearity. The idea behind the proof of this result is that, by choosing $m$ large enough, the problem of finding $\mathcal{H}(X)$ reduces to solving a homogeneous linear system with more unknowns than equations, thus allowing one to find an infinity of non-trivial solutions.\\
Hence, if the inverse of $\mathcal{H}(X)$ admits a series expansion over a region $D_{\mathcal{H}}(\tau_{\alpha})$, the aforementioned second order differential equation reads
\begin{equation}
    \ddot{z}-\tau_{\alpha}\dot{z}+\delta_{\alpha} z=G(z,\dot z)=\mathcal{G}_2\cdot\Lambda_2(z,\dot z)+\mathcal{G}_3\cdot\Lambda_3(z,\dot z)+\|(z,\dot z)\|^4,\label{varsigma10}
\end{equation}
over $D_{\mathcal{H}}(\tau_{\alpha})$ where $\Lambda_k(x_1,x_2)=(x^k_1,x^{k-1}_1x_2,\ldots,x_1x_2^{k-1},x_2^k)$.
Once the system has been transformed into a second-order equation, the KBM averaging method provides a tool for determining both the existence and an asymptotic approximation of limit cycles. In section \ref{KBMM}, we apply it and we obtain the main result of our paper:
\begin{theo}
Suppose that, for any sufficiently small $\tau_{\alpha}$, there exists a family of changes of variables $\mathcal{H}(X,\tau_{\alpha})$ and $r_*(\tau_{\alpha})>0$, $\lim\limits_{\tau_{\alpha}\rightarrow 0}\frac{|\tau_{\alpha}|^{1/2}}{r_*(\tau_{\alpha})}=0$, such that
    \begin{itemize}
        \item[{\it i)}] $\dot{\left(\Pi_1\mathcal{H}(X)\right)}=\Pi_2\mathcal{H}(X)$, avec $|\Gamma|\neq 0$.
        \item[{\it ii)}] The value of
        $$
       p_3(0):=\lim\limits_{\tau_{\alpha}\rightarrow 0} p_3(\tau_{\alpha}):=\frac{1}{2\pi}\int_0^{2\pi}\mathcal{G}_3\cdot\Lambda_3(\cos\phi,-\sqrt{\delta_{\alpha}}\sin\phi)\sin\phi\ d\phi,
    $$
    is finite and non-zero. 
        \item[{\it iii)}] $\mathcal{H}^{-1}(Y,\tau_{\alpha})$ admits a power series of \ $Y$ over a region $D_{\mathcal{H}}(\tau_{\alpha})$, satisfying $B(0,r_*(\tau_{\alpha}))\subset D_{\mathcal{H}}(\tau_{\alpha})$. 
    \end{itemize}
    Then, for any sufficiently small $\tau_{\alpha}$ and $\sign(\tau_{\alpha})=\sign(p_3(0))$, a non-trivial periodic solution for the equation \eqref{varsigma10} exists and an approximation to it, of order $|\tau_{\alpha}|$ is given by
    $$
    \bar z(t)=\sqrt{|\tau_{\alpha}|}r_0\sin (\omega_0 t).
    $$
    where 
    $$
   r_0=\sqrt\frac{\delta_{\alpha}}{2 |p_3(0)|},\ \omega_0=1-\frac{\tau_{\alpha}}{2}\frac{p_3(0)}{q_3(0)},
    $$ 
    with
    $$q_3(0):=\lim\limits_{\tau_{\alpha}\rightarrow 0}q_3(\tau_{\alpha}):= \frac{1}{2\pi}\int_0^{2\pi}\mathcal{G}_3\cdot\Lambda_3(\cos\phi,-\sqrt{\delta_{\alpha}}\sin\phi)\cos\phi\ d\phi.$$
    Furthermore, the periodic solution corresponds to the appearance of, at least, a limit cycle for system \eqref{ODE1}. If such limit cycle is unique on a neighborhood of order $\sqrt{|\tau_{\alpha}|}$, then is a stable limit cycle if $\tau_{\alpha}>0$ (supercritical Hopf bifurcation) and and unstable limit cycle if $\tau_{\alpha}<0$ (subcritical Hopf bifurcation).
\end{theo}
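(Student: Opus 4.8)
The plan is to read \eqref{varsigma10} at the amplitude scale of a Hopf bifurcation, where it becomes a small perturbation of a harmonic oscillator, and then to run the Krylov--Bogoliubov--Mitropolsky scheme to second order. \textbf{Step 1 (rescaling).} A periodic orbit bifurcating at $\tau_\alpha=0$ has amplitude of order $|\tau_\alpha|^{1/2}$, so I would set $\mu:=|\tau_\alpha|^{1/2}$, write $\tau_\alpha=\sigma\mu^2$ with $\sigma=\sign(\tau_\alpha)$, and substitute $z=\mu\rho$ in \eqref{varsigma10}. By homogeneity of $\Lambda_k$ this yields
\begin{equation*}
\ddot\rho+\delta_\alpha\rho=\mu\,\mathcal{G}_2\cdot\Lambda_2(\rho,\dot\rho)+\mu^2\bigl(\sigma\dot\rho+\mathcal{G}_3\cdot\Lambda_3(\rho,\dot\rho)\bigr)+O(\mu^3),
\end{equation*}
a perturbation of $\ddot\rho+\delta_\alpha\rho=0$ of size $\mu$. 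Hypothesis \emph{iii)} enters precisely here: it makes \eqref{varsigma10} legitimate on $B(0,r_*(\tau_\alpha))$, and since $\mu/r_*(\tau_\alpha)\to0$ the orbit to be constructed, of amplitude $\sim\mu$, sits well inside that ball, so the truncated equation represents the original dynamics faithfully there.

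\textbf{Step 2 (amplitude--phase variables).} Passing to van der Pol coordinates $\rho=r\cos\phi$, $\dot\rho=-r\sqrt{\delta_\alpha}\sin\phi$ gives a slow system
\begin{equation*}
\dot r=\mu R_1(r,\phi)+\mu^2R_2(r,\phi)+O(\mu^3),\qquad \dot\phi=\sqrt{\delta_\alpha}+\mu\Phi_1(r,\phi)+\mu^2\Phi_2(r,\phi)+O(\mu^3),
\end{equation*}
in which $R_1$ and $\Phi_1$ are $r^2\,\mathcal{G}_2\cdot\Lambda_2(\cos\phi,-\sqrt{\delta_\alpha}\sin\phi)$ times $\sin\phi$, resp.\ $\cos\phi$, up to constants. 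Since $\Lambda_2$ is quadratic, every term of $R_1,\Phi_1$ is an odd trigonometric monomial, so their $\phi$--averages vanish: first--order averaging is inconclusive, which is exactly why the KBM scheme must be carried to second order in this situation.

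\textbf{Step 3 (second--order averaging and nondegeneracy).} I would then apply a near--identity transformation $r=\bar r+\mu^2w(\bar r,\phi)+\cdots$, $\phi=\bar\phi+\mu^2v(\bar r,\phi)+\cdots$ killing the $O(\mu^2)$ oscillation, and collect the next non-trivial order. The averaged amplitude equation comes out as $\dot{\bar r}=\mu^2\bar r\bigl(\tfrac{\sigma}{2}-c(\tau_\alpha)\bar r^2\bigr)+O(\mu^3)$ with $c(\tau_\alpha)\to c(0)$ a constant multiple of $p_3(0)$; by \emph{ii)} it is finite and nonzero, so choosing $\tau_\alpha$ small with $\sigma=\sign(p_3(0))$ the bracket vanishes at a single positive value $\bar r_0$ — one checks $\bar r_0^2=\delta_\alpha/(2|p_3(0)|)$, the $r_0$ of the statement — and this equilibrium is hyperbolic since the bracket has derivative $\tfrac{\sigma}{2}-3\tfrac{\sigma}{2}=-\sigma\neq0$ there. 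The main difficulty I foresee is inside this step: tracking the contribution of the quadratic part $\mathcal{G}_2\cdot\Lambda_2$ fed through the phase correction $v$, and verifying — using the structural identity \emph{i)} $\dot{\left(\Pi_1\mathcal{H}(X)\right)}=\Pi_2\mathcal{H}(X)$ (with $|\Gamma|\neq0$) — that it produces no cubic coefficient beyond $p_3$ in the averaged equation; that verification, rather than the rest, is where I would be most careful.

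\textbf{Step 4 (periodic orbit, limit cycle, stability).} The second--order averaging theorem then promotes the hyperbolic equilibrium $\bar r_0$ to a true periodic solution of the rescaled equation within $O(\mu)$ of the circle $r=\bar r_0$, of period $2\pi/\sqrt{\delta_\alpha}+O(\mu^2)$; undoing $z=\mu\rho$ gives a periodic solution of \eqref{varsigma10} with leading profile $\bar z(t)=\sqrt{|\tau_\alpha|}\,r_0\sin(\omega_0 t)$, the frequency $\omega_0=1+O(\tau_\alpha)$ being obtained by evaluating the averaged phase equation (which brings in $q_3$) at $\bar r_0$, and the $O(|\tau_\alpha|)$ accuracy being the standard averaging estimate over the time scale $1/\mu^2$. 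Transporting this closed curve back through $\mathcal{H}^{-1}$ — allowed on $B(0,r_*(\tau_\alpha))$ by \emph{iii)}, consistent with the original flow by \emph{i)}, and a local diffeomorphism since $|\Gamma|\neq0$ — yields a closed orbit, hence at least one limit cycle, of \eqref{ODE1}. Lastly the sign of the linearisation $-\sigma\mu^2$ of the averaged amplitude equation at $\bar r_0$ settles the stability: it is negative for $\tau_\alpha>0$, so the averaged equilibrium and — if it is the unique orbit in the $\sqrt{|\tau_\alpha|}$--neighbourhood — the limit cycle is asymptotically stable (supercritical Hopf), and positive for $\tau_\alpha<0$, so unstable (subcritical Hopf).
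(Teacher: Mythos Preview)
Your route is the same as the paper's: rescale $z$ by $|\tau_\alpha|^{1/2}$, apply KBM to the resulting near-harmonic equation, locate the positive equilibrium of the amplitude equation, check via hypothesis~\emph{iii)} that the orbit sits inside $D_{\mathcal H}(\tau_\alpha)$, and pull back through $\mathcal H^{-1}$. Two differences in execution are worth flagging. First, the paper also rescales time by $\sqrt{\delta_\alpha}$, passing to $\ddot\varsigma+\varsigma=\ldots$ rather than your $\ddot\rho+\delta_\alpha\rho=\ldots$; this is cosmetic. Second, and more substantively, the paper does \emph{not} carry out the second-order averaging you outline in Step~3: it simply applies the first-order KBM formulae for $A_1,B_1$ to the full right-hand side of the rescaled equation, observes that the $\mathcal G_2\cdot\Lambda_2$ contribution averages to zero, and reads off $A_1(r)=\sqrt{|\tau_\alpha|}\bigl(\sign(\tau_\alpha)\,\tfrac{r}{2}-\tfrac{p_3(\tau_\alpha)}{\delta_\alpha}r^3\bigr)$ and $B_1(r)=-\tfrac{\sqrt{|\tau_\alpha|}}{\delta_\alpha}q_3(\tau_\alpha)r^2$ directly from the $\varepsilon^2$ term. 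There is no near-identity transformation and no tracking of quadratic-through-quadratic feedback in the paper's argument.

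Your worry about that feedback is therefore well placed --- in a bona fide second-order averaging the $\mathcal G_2$ term generically contributes to the $\mu^2$ amplitude equation --- but the mechanism you propose for disposing of it is wrong: condition~\emph{i)} is only the compatibility relation that makes $\mathcal H$ convert the planar system into a scalar second-order equation, and it carries no information about whether the averaging feedback vanishes. The paper simply does not confront this point; it works at first order throughout. So your scaffold matches the paper's, but you are being more scrupulous about the averaging hierarchy than the paper itself. One small slip in your Step~3: to remove the order-$\mu$ oscillation coming from $\mathcal G_2\cdot\Lambda_2$, the near-identity correction must enter at order $\mu$, not $\mu^2$.
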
 

\section{Existence and approximation of the limit cycle}
Consider the system of differential equations
\begin{equation}
    \left\{\begin{matrix}
\dot x_1&=&f(x_1,x_2,\alpha)\\
\dot x_2&=&g(x_1,x_2,\alpha)
\end{matrix}\right.\label{ODE1}
\end{equation}
where $\alpha\in\mathbb{R}^k$ is a $k$-dimensional parameter and $f,g:\mathbb{R}^2\times \mathbb{R}^k\rightarrow\mathbb{R}$ are analytic functions satisfying $f(0,0,\alpha)=g(0,0,\alpha)=0$ for all $\alpha\in \mathbb{R}^k$. In other words, $(0,0)$ is a steady state of system \eqref{ODE1} for all values of $\alpha$. Adopting the same notations as in \cite{SMMR}, we set
$$
X:=\begin{pmatrix}
    x_1\\
    x_2
\end{pmatrix},\quad
J_{\alpha}:=\begin{pmatrix}
    \partial_{x_1}f(0,0,\alpha)&\partial_{x_2}f(0,0,\alpha)\\
    \partial_{x_1}g(0,0,\alpha)&\partial_{x_2}g(0,0,\alpha)
\end{pmatrix} \mbox{ and } \varPsi(X)=\begin{pmatrix}
    f(X,\alpha)\\
    g(X,\alpha)
\end{pmatrix}-J_{\alpha}X,
$$
so system \eqref{ODE1} takes the form
\begin{equation}
    \dot{X}=\mathcal{F}(X):=J_{\alpha}X+\varPsi(X).\label{ODEMform}
\end{equation}
Denote $\tau_{\alpha}:= tr(J_{\alpha})$ and $\delta_{\alpha}:=det(J_{\alpha})$. If there exists $\alpha_{0}$ such that $\tau_{\alpha_0}=0$, $\tau^2_{\alpha_0}-4\delta_{\alpha_0}<0$ and $\frac{\partial }{\partial_{\alpha_i}}\tau_{\alpha}\big|_{\alpha=\alpha_0}\neq 0$, for some $i\in\{1,\ldots,k\}$, then a Hopf bifurcation ( also known as Poincaré-Andronov-Hopf bifurcation) occurs around $\alpha_0$ and periodic orbits around the steady state $(0,0)$ appear for $\alpha$ on a vicinity of $\alpha_0$ (see \cite{hale2012dynamics}).\\
Averaging methods, such as the Krylov-Bogoliuvob-Mitropolski one (see \cite{mickens1996oscillations}) provide a tool to study the stability and asymptotic expansion in powers of $\tau_{\alpha}$ of said periodic orbits. To proceed, let us do in \eqref{ODEMform}, an invertible analytical transform of coordinates of a neighborhood of the origin onto another
\begin{equation}
    Y=\mathcal{H}(X)=\varGamma X+\mathcal{G}(X),\label{25}
\end{equation}
driven by a non-singular matrix $\varGamma$, and the analytic vector function:
\begin{equation}
   \mathcal{G}(X)=\begin{pmatrix}
    \overline{\varphi}(U,V)\\ \overline{\psi}(U,V)
   \end{pmatrix}=\begin{pmatrix}
    \sum\limits_{n=2}^{\infty}\sum\limits_{i+j=n}\overline{\varphi}_{ij}U^iV^j\\
    \sum\limits_{n=2}^{\infty}\sum\limits_{i+j=n}\overline{\psi}_{ij}U^iV^j
   \end{pmatrix} \label{26}
\end{equation}
which is assumed to have a positive radius of convergence. We shall denote the inverse
\begin{equation}
   X=\mathcal{H}^{-1}(Y)=\varGamma^{-1}Y+\mathcal{K}(Y), \label{27}
\end{equation}
where
\begin{equation}
    \mathcal{K}(Y)=\begin{pmatrix}
    \underline{\varphi}(z,\dot{z})\\ \underline{\psi}(z,\dot{z})
   \end{pmatrix}=\begin{pmatrix}
    \sum\limits_{n=2}^{\infty}\sum\limits_{i+j=n}\underline{\varphi}_{ij}z^i(\dot{z})^j\\
    \sum\limits_{n=2}^{\infty}\sum\limits_{i+j=n}\underline{\psi}_{ij}z^i(\dot{z})^j
   \end{pmatrix}\label{28}
\end{equation}
If $\mathcal{H}$ is such that
\begin{equation}
    Y=\begin{pmatrix}
     z\\\dot{z}
    \end{pmatrix}\label{29}
\end{equation}
for some function $z(t)$, the integration of the system can be reduced to the integration of a second order differential equation in the variable $z$.\\
The change of variables $\mathcal{H}$ verifies \eqref{29} if and only if
\begin{equation}
   \frac{d}{dt}(\varPi_1\mathcal{H})=\varPi_2\mathcal{H}.\label{33}
\end{equation}
Equation \eqref{33} implies that the components $\gamma_{ij}$ of $\varGamma$ verify the following concordance condition with the Jacobian of the system at the steady state
\begin{equation}
    J_{\alpha}^T\begin{pmatrix}
     \gamma_{11}\\ \gamma_{12}
    \end{pmatrix}=\begin{pmatrix}
     \gamma_{21}\\ \gamma_{22}
    \end{pmatrix}.\label{34}
\end{equation}
The concordance condition can be satisfied by choosing $\Gamma=a\Gamma_1+b\Gamma_2$, a linear combination of
\[
\Gamma_1=\begin{pmatrix}
1&0\\
j_{11}&j_{12}
\end{pmatrix}
\mbox{ and }
\Gamma_2=\begin{pmatrix}
0&1\\
j_{21}&j_{22}
\end{pmatrix},
\]
where $j_{kl}$ are the components of $J_{\alpha}$. Developing the left hand side term in \eqref{33} we get
\begin{align*}
    \frac{d}{dt}(\varPi_1\mathcal{H})&=\varPi_1\dot{\mathcal{H}}\\
    &=\varPi_1(\varGamma \dot{X}+\dot{\mathcal{G}})\\
    &=\varPi_1(\varGamma(J_{\alpha}X+\varPsi(X)))+\nabla\overline{\varphi}\cdot \dot{X}\\
    &=\varPi_1(\varGamma J_{\alpha}X)+\varPi_1(\varGamma\varPsi(X))+\nabla\overline{\varphi}\cdot \dot{X}.
\end{align*}
On the other hand, the right term on \eqref{33} is equal to
\begin{equation}
    \varPi_2\mathcal{H}=\varPi_2 \varGamma X+\overline{\psi}(X).
\end{equation}
Thanks to \eqref{34}, we know that $\varPi_1(\varGamma J_{\alpha}X)=\varPi_2 \varGamma X$, and, in conclusion we get the relation
\begin{equation}
    \overline{\psi}(X)=\varPi_1(\varGamma\varPsi(X))+\nabla\overline{\varphi}\cdot \dot{X}=\varPi_1(\varGamma\varPsi(X))+\frac{\partial\overline{\varphi}}{\partial U}\dot{U}+\frac{\partial\overline{\varphi}}{\partial V}\dot{V}.\label{PsiPhi}
\end{equation}
The equation satisfied by $z(t)$ is then
\begin{align}
    \ddot{z}=\dot{\Pi_2 \mathcal{H}(X)}=&\ \Pi_2 \left(\Gamma \dot{X}+\nabla\mathcal{G}(X)\cdot \dot{X}\right)\nonumber\\
    =&\ \Pi_2 \left(\Gamma J_{\alpha} X+\Gamma \varPsi (X)+\nabla\mathcal{G}(X)\cdot\mathcal{F}(X)\right)\nonumber\\
    =&\ \Pi_2 \left(\Gamma J_{\alpha} \Gamma^{-1}Y+\Gamma J_{\alpha}\mathcal{K}(Y)+\Gamma \varPsi (\mathcal{H}^{-1}(Y))+\nabla\mathcal{G}(\mathcal{H}^{-1}(Y))\cdot\mathcal{F}(\mathcal{H}^{-1}(Y))\right)\nonumber\\
    =&\ \tau_{\alpha}\dot{z}-\delta_{\alpha}z+ \Pi_2 \left(\Gamma J\mathcal{K}(Y)+\Gamma \varPsi (\mathcal{H}^{-1}(Y))+\nabla\mathcal{G}(\mathcal{H}^{-1}(Y))\cdot\mathcal{F}(\mathcal{H}^{-1}(Y))\right),\label{zdotdot}
\end{align}
for which is possible to obtain an approximation of the limit cycle by means of the Krylov–Bogoliubov-Mitropolski averaging method.\\
In \cite{SMMR} was remarked that if $\Gamma$ can be chosen in such a way that $\varPi_1(\varGamma\varPsi(X))=\mathcal{O}(\|X\|^M)$, with $M$ sufficiently high, then, by setting as $0$ the coefficients of $\overline{\varphi}$ associated to the powers of order lower than $M$, then, the coefficients of $\overline{\psi}(X)$ associated to those same powers, would also be $0$. As a consequence, the approximation and stability analysis of the periodic orbits only depend on the term $\Pi_2\left(\Gamma \varPsi (\Gamma^{-1}Y)\right)$, which only requires us to know the linear part $\Gamma X$ of the change of variables $\mathcal{H}(X)$. An example of a system where such a change of variables can be found is the Lengyel-Epstein reaction system, showcased in \cite{sarria2021bifurcations}. \\
However, is not always possible to guarantee the existence of a change of variables satisfying the aforementioned condition, and in those cases, the approximation and stability analysis of the periodic orbits depends on terms beyond the linear one. We aim on the following sections to develop a methodology, relying on a polynomial choice of $\mathcal{H}(X)$, which allows to address the asymptotic study of periodic orbits in the general case (i.e. $\varPi_1(\varGamma\varPsi(X))$ of any order).

\subsection{Polynomial non-linearity}\label{CoV}
 Given that any analytic non-linearity can be approximated by a polynomial expression of finite degree, as done for the tumor-immune system interaction system presented in \cite{CyC}, we restrict our study to that of polynomial non-linearities. Our first result states that for any polynomial non-linearity there exists a polynomial change of variables $\mathcal{H}$ which satisfies condition \eqref{33}.
\begin{prop}\label{PolyH}
    For any polynomial non-linearity $\varPsi(X)$ of degree $n$, there exists a non-trivial polynomial change of variables $\mathcal{H}$ of degree $m$, with
    \begin{eqnarray}
        m\leqslant \left\lceil\frac{2n-5+\sqrt{8n^2-16n+25}}{2}\right\rceil,\label{mmin}
    \end{eqnarray}
    such that condition \eqref{33} is satisfied.
\end{prop}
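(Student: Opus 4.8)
The plan is to turn condition \eqref{33} into a finite \emph{homogeneous} linear system for the coefficients of $\mathcal{H}$ and then to count. Writing $\varGamma=a\Gamma_1+b\Gamma_2$ makes the concordance condition \eqref{34}, and with it the degree-one part of \eqref{33}, hold automatically for every $(a,b)$; by the computation that produced \eqref{PsiPhi}, condition \eqref{33} then reduces to the polynomial identity $\overline{\psi}(X)=\varPi_1\!\big(\varGamma\varPsi(X)\big)+\nabla\overline{\varphi}(X)\cdot\mathcal{F}(X)$, where $\mathcal{F}(X)=J_\alpha X+\varPsi(X)$ is the known right-hand side of \eqref{ODEMform}. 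Expanding $\overline{\varphi}$ and $\overline{\psi}$ into their homogeneous parts of degrees $2,\ldots,m$ and matching the coefficient of each monomial $x_1^{i}x_2^{j}$, one obtains a linear system in the unknowns $\overline{\varphi}_{ij}$, $\overline{\psi}_{ij}$ (for $2\le i+j\le m$) together with the two scalars $a,b$; each term on either side is linear in these unknowns, with coefficients assembled from the entries of $J_\alpha$ and the coefficients of $\varPsi$, so the system is homogeneous.

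The decisive bookkeeping observation is that $\frac{d}{dt}(\varPi_1\mathcal{H})=\varPi_1(\varGamma\mathcal{F})+\nabla\overline{\varphi}\cdot\mathcal{F}$ has degree up to $(m-1)+n=m+n-1$, its top piece being $\nabla\overline{\varphi}\cdot\varPsi$, whereas $\varPi_2\mathcal{H}$ has degree only $m$. Hence the identity splits into coefficient equations for the monomials of degrees $2,\ldots,m+n-1$ — the degree-one part being already guaranteed by the choice of $\varGamma$ — that is, $\sum_{d=2}^{m+n-1}(d+1)$ scalar equations, whereas the unknowns number $2\sum_{d=2}^{m}(d+1)+2=(m+1)(m+2)-4$. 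The count of unknowns is quadratic in $m$ with leading term $m^{2}$, the count of equations is quadratic with leading term $\tfrac{1}{2}(m+n)^{2}$, so for $n$ fixed the unknowns eventually outnumber the equations; solving the corresponding quadratic inequality in $m$ returns precisely the threshold recorded in \eqref{mmin}. For every $m$ at least that large the homogeneous system has strictly more unknowns than equations, hence a solution space of positive dimension, which is the family of non-trivial solutions announced in the introduction.

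It remains to extract from that space a genuine change of variables, namely one with $\det\varGamma\neq0$ — so that $\mathcal{H}$ is a local diffeomorphism at the origin by the inverse function theorem, its linear part there being $\varGamma$ — and with $\mathcal{G}\not\equiv0$. Here one uses that $\det(a\Gamma_1+b\Gamma_2)=j_{12}a^{2}+(j_{22}-j_{11})ab-j_{21}b^{2}$ is a binary quadratic form whose discriminant is $\tau_\alpha^{2}-4\delta_\alpha$, negative near the bifurcation value, so that the form is definite and vanishes only at $(a,b)=(0,0)$; it therefore suffices to exhibit one solution of the system with $(a,b)\neq(0,0)$, namely $\overline{\varphi}\equiv0$, $\overline{\psi}:=\varPi_1(\varGamma\varPsi(X))$ — of degree $\le n\le m$ — together with $(a,b)=(1,0)$. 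This shows that the solutions with vanishing linear part form a proper subspace of the positive-dimensional solution space, so a generic element has $\det\varGamma\neq0$, and likewise $\mathcal{G}\not\equiv0$ as soon as $\varPsi\not\equiv0$. The one genuinely delicate step is the degree accounting above: one must notice that the relevant top degree is $m+n-1$, coming from $\nabla\overline{\varphi}\cdot\varPsi$, rather than $m$, and then keep the count precise enough to land on the explicit bound \eqref{mmin}; once the system is set up correctly, the dimension inequality and the definiteness remark are routine.
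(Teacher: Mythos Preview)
Your proof is correct and follows the same strategy as the paper: both reduce condition \eqref{33} to a homogeneous linear system in the coefficients of $\mathcal{H}$, count $m^{2}+3m-2$ unknowns against at most $\tfrac{1}{2}\bigl(m^{2}+(2n+1)m+n(n+1)-6\bigr)$ scalar equations (one per monomial of degree $2,\ldots,m+n-1$), and solve the resulting quadratic for the threshold \eqref{mmin}; the paper merely packages the same bookkeeping through the explicit operators $\Lambda_k$, $R_k$, $L_k$, $T_{i,j,k}$. Your closing argument that a solution with $\det\varGamma\neq0$ can actually be extracted is a welcome addition that goes slightly beyond what the paper proves here, since the paper defers the hypothesis $|\varGamma|\neq0$ to Theorem~\ref{psinvH}.
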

\begin{proof}
    The idea behind the proof of Proposition \ref{PolyH} is the fact that choosing $m$ sufficiently big, condition \eqref{33} is reduced to an homogeneous linear system with more unknowns than equations, hence making it possible to find infinitely many non trivial solutions. We give the detailed proof in what follows, in order to introduce some elements that will be of use later on.\\
    If $\varPsi(X)$ is a polynomial function, then there exist $n\in\mathbb{N}$ and matrices $\varphi_k\in \mathcal{M}_{2\times(k+1)}(\mathbb{R})$, $k=2,3,\ldots,n$, with $\varphi_n\neq \mathbf{0}_{2\times(k+1)}$ such that
    $$
    \varPsi(X)=\sum_{k=2}^{n}\varphi_k\Lambda_k(U,V),
    $$
    where $\Lambda_k(U,V):=(U^k,U^{k-1}V,\ldots,UV^{k-1},V^{k})^T$.\\
    The family of operators $\Lambda_k(U,V)$ has some useful properties. For example
    \begin{itemize}
        \item \begin{equation}
            \dot{\Lambda_k}(U,V)=\ \dot{U}\begin{pmatrix}
                kU^{k-1}\\
                (k-1)U^{k-2}V\\
                \vdots\\
                V^{k-1}\\
                0
            \end{pmatrix}+\dot{V}\begin{pmatrix}
                0\\
                U^{k-1}
                \\
                \vdots\\
                (k-1)UV^{k-2}\\
                kV^{k-1}
            \end{pmatrix}
            =\left(\dot{U}R_k+\dot{V}L_k\right)\Lambda_{k-1}(U,V),\label{dotLambda}
        \end{equation}
        where $R_k,L_k\in \mathcal{M}_{(k+1)\times k}(\mathbb{R})$ are defined as
        $$
        R_k:=\begin{pmatrix}
            k&0&\cdots&0&0\\
            0&k-1&\cdots&0&0\\
            \vdots&\vdots&\ddots&\vdots&\vdots\\
            0&0&\cdots&2&0\\
            0&0&\cdots&0&1\\
            0&0&\cdots&0&0
        \end{pmatrix} \mbox{ and }L_k:=\begin{pmatrix}
            0&0&\cdots&0&0\\
            1&0&\cdots&0&0\\
            0&2&\cdots&0&0\\
            \vdots&\vdots&\ddots&\vdots&\vdots\\
            0&0&\cdots&k-1&0\\
            0&0&\cdots&0&k
        \end{pmatrix}
        $$
        \item 
        \begin{equation}
            U^p\Lambda_k(U,V)=\begin{pmatrix}
                U^{k+p}\\
                U^{k+p-1}V\\
                \vdots\\
                U^{p-1}V^{k-1}\\
                U^pV^k
            \end{pmatrix}=\widehat{S}_{k,p}\Lambda_{k+p}(U,V)\label{UpLamk}
        \end{equation}
        where $$\widehat{S}_{k,p}\in\mathcal{M}_{(k+1)\times (k+p+1)}(\mathbb{R})$$ is defined as the block matrix $\widehat{S}_{k,p}:=\begin{pmatrix}\mathbf{I}_{k+1}&\mathbf{0}_{(k+1)\times p}\end{pmatrix}$.\\
        Similarly, we obtain
    \begin{equation}
            V^p\Lambda_k(U,V)=\widecheck{S}_{k,p}\Lambda_{k+p}(U,V)\label{VpLamk}
        \end{equation}
        where $\widecheck{S}_{k,p}:=\begin{pmatrix}\mathbf{0}_{(k+1)\times p}&\mathbf{I}_{k+1}\end{pmatrix}$.\label{DownLamk}
    \end{itemize}
Let us consider a change of variables of the form
$$
\mathcal{H}(X)=\varGamma X+\sum_{k=2}^{m}\Theta_k\Lambda_k(U,V),
$$
where $\Gamma$ satisfies the concordance condition, and the values of $m\in\mathbb{N}$ and $\Theta_k\in\mathcal{M}_{2\times(k+1)}(\mathbb{R})$ will be fixed later in such a way that condition \eqref{33} is fulfilled.\\
Adopting the notation $\Theta_1:=\Gamma$, we notice that
$$
\varPi_1\mathcal{H}=\sum_{k=1}^{m}\langle e_1,\Theta_k\Lambda_k(U,V)\rangle,
$$
where $\langle\cdot,\cdot\rangle$ represents the euclidean scalar product in $\mathbb{R}^2$ and $e_1$ is the first element of the canonical basis on the same space. Deriving with respect to $t$, we get
\begin{equation}
\dot{\varPi_1\mathcal{H}}=\sum_{k=1}^{m}\langle e_1,\Theta_k\dot{\Lambda_k}(U,V)\rangle=\langle e_1,\Gamma\dot{\Lambda_1}(U,V)\rangle+\sum_{k=2}^{m}\langle e_1,\Theta_k\dot{\Lambda_k}(U,V)\rangle \label{dotH1}    
\end{equation}
On the first place, thanks to the concordance condition, we have
\begin{align*}
    \langle e_1,\Gamma\dot{\Lambda_1}(U,V)\rangle=\langle \Gamma^Te_1,\dot{\Lambda_1}(U,V)\rangle=&\ \langle \Gamma^Te_1,J_{\alpha}X+\sum_{k=2}^{n}\varphi_k\Lambda_k(U,V)\rangle\\
    =&\ \langle J^T_{\alpha}\Gamma^Te_1,X\rangle+\sum_{k=2}^{n}\langle \Gamma^Te_1,\varphi_k\Lambda_k(U,V)\rangle\\
    =&\  \langle \Gamma^Te_2,X\rangle+\sum_{k=2}^{n}\langle \varphi^T_k\Theta^T_1e_1,\Lambda_k(U,V)\rangle.
\end{align*}
On the other hand, thanks to properties \eqref{dotLambda}, \eqref{UpLamk} and \eqref{DownLamk} we have
\begin{align*}
\sum_{k=2}^{m}\langle e_1,\Theta_k\dot{\Lambda_k}(U,V)\rangle=&\ \sum_{k=2}^{m}\langle \Theta^T_ke_1,\dot{\Lambda_k}(U,V)\rangle\\
=&\ \sum_{k=2}^{m}\langle \Theta^T_ke_1,\left(\dot{U}R_k+\dot{V}L_k\right)\Lambda_{k-1}(U,V)\rangle\\
=&\ \sum_{k=2}^{m}\langle \left(\dot{U}R^T_k+\dot{V}L^T_k\right)\Theta^T_ke_1,\Lambda_{k-1}(U,V)\rangle\\
=&\ \dot{U}\sum_{k=2}^{m}\langle R^T_k\Theta^T_ke_1,\Lambda_{k-1}(U,V)\rangle+\dot{V}\sum_{k=2}^{m}\langle L^T_k\Theta^T_ke_1,\Lambda_{k-1}(U,V)\rangle.
\end{align*}
Denoting $\varphi_1:=J_{\alpha}$, then
$$
\dot{U}=\sum\limits_{j=1}^n\langle e_1,\varphi_j\Lambda_j(U,V)\rangle=\sum\limits_{j=1}^n\sum\limits_{l=1}^{j+1}\varphi_j(1,l)U^{j-l+1}V^{l-1}
$$
which, thanks to \eqref{UpLamk} and \eqref{DownLamk}, gives
\begin{align*}
    &\dot{U}\sum_{k=2}^{m}\langle R^T_k\Theta^T_ke_1,\Lambda_{k-1}(U,V)\rangle\\
    =&\ \left(\sum\limits_{j=1}^n\sum\limits_{l=1}^{j+1}\varphi_j(1,l)U^{j-l+1}V^{l-1}\right)\left(\sum_{k=2}^{m}\langle R^T_k\Theta^T_ke_1,\Lambda_{k-1}(U,V)\rangle\right)\\
    =&\ \sum_{k=2}^{m}\Big\langle R^T_k\Theta^T_ke_1,\sum\limits_{j=1}^n\sum\limits_{l=1}^{j+1}\varphi_j(1,l)U^{j-l+1}V^{l-1}\Lambda_{k-1}(U,V)\Big\rangle\\
    =&\ \sum_{k=2}^{m}\Big\langle R^T_k\Theta^T_ke_1,\sum\limits_{j=1}^n\sum\limits_{l=1}^{j+1}\varphi_j(1,l)U^{j-l+1}\widecheck{S}_{k-1,l-1}\Lambda_{k+l-2}(U,V)\Big\rangle\\
    =&\ \sum_{k=2}^{m}\Big\langle R^T_k\Theta^T_ke_1,\sum\limits_{j=1}^n\sum\limits_{l=1}^{j+1}\varphi_j(1,l)\widecheck{S}_{k-1,l-1}\widehat{S}_{k+l-2,j-l+1}\Lambda_{k+j-1}(U,V)\Big\rangle\\
    =&\ \sum_{k=2}^{m}\Big\langle R^T_k\Theta^T_ke_1,\sum\limits_{j=1}^nT_{1,j,k}\Lambda_{k+j-1}(U,V)\Big\rangle,
\end{align*}
where
$$
T_{1,j,k}:=\sum\limits_{l=1}^{j+1}\varphi_j(1,l)\widecheck{S}_{k-1,l-1}\widehat{S}_{k+l-2,j-l+1}\in\mathcal{M}_{k\times(j+k)}(\mathbb{R}).
$$
For $k=2,\ldots,m$ and $j=1,\ldots,n$, the previous relation reads
\begin{align*}
    \dot{U}\sum_{k=2}^{m}\langle R^T_k\Theta^T_ke_1,\Lambda_{k-1}(U,V)\rangle=&\ \sum_{k=2}^{m}\Big\langle R^T_k\Theta^T_ke_1,\sum\limits_{j=1}^n T_{1,j,k}\Lambda_{k+j-1}(U,V)\Big\rangle\\
    =&\ \sum_{k=2}^{m}\sum\limits_{j=1}^n\Big\langle R^T_k\Theta^T_ke_1, T_{1,j,k}\Lambda_{k+j-1}(U,V)\Big\rangle\\
    =&\ \sum_{k=2}^{m}\sum\limits_{j=1}^n\Big\langle T^T_{1,j,k}R^T_k\Theta^T_ke_1, \Lambda_{k+j-1}(U,V)\Big\rangle.
\end{align*}
The double sum can be re-arranged so
\begin{equation}
    \dot{U}\sum_{k=2}^{m}\langle R^T_k\Theta^T_ke_1,\Lambda_{k-1}(U,V)\rangle=\sum_{k=2}^{m+n-1}\Big\langle \sum\limits_{j=1,\ldots,n,\atop l=2,\ldots,m}^{j+l-1=k}T^T_{1,j,l}R^T_l\Theta^T_le_1, \Lambda_{k}(U,V)\Big\rangle\label{TermA}
\end{equation}
A similar computation gives
\begin{equation}
    \dot{V}\sum_{k=2}^{m}\langle L^T_k\Theta^T_ke_1,\Lambda_{k-1}(U,V)\rangle=\sum_{k=2}^{m+n-1}\Big\langle \sum\limits_{j=1,\ldots,n,\atop l=2,\ldots,m}^{j+l-1=k}T^T_{2,j,l}L^T_l\Theta^T_le_1, \Lambda_{k}(U,V)\Big\rangle\label{TermB}
\end{equation}
where
$$
T_{2,j,k}:=\sum\limits_{l=1}^{j+1}\varphi_j(2,l)\widecheck{S}_{k-1,l-1}\widehat{S}_{k+l-2,j-l+1},
$$
for $k=2,\ldots,m$ and $j=1,\ldots,n$.\\
Putting \eqref{TermA} and \eqref{TermB} together, we get
$$
\sum_{k=2}^{m}\langle e_1,\Theta_k\dot{\Lambda_k}(U,V)\rangle=\sum_{k=2}^{m+n-1}\Big\langle \sum\limits_{j=1,\ldots,n,\atop l=2,\ldots,m}^{j+l-1=k}\left(T^T_{1,j,l}R^T_l+T^T_{2,j,l}L^T_l\right)\Theta^T_le_1, \Lambda_{k}(U,V)\Big\rangle.
$$
In conclusion
\begin{align*}
    \dot{\varPi_1\mathcal{H}}=&\langle \Gamma^Te_2,X\rangle+\sum_{k=2}^{n}\langle \varphi^T_k\Theta^T_1e_1,\Lambda_k(U,V)\rangle\\
    &+\sum_{k=2}^{m+n-1}\Big\langle \sum\limits_{j=1,\ldots,n,\atop l=2,\ldots,m}^{j+l-1=k}\left(T^T_{1,j,l}R^T_l+T^T_{2,j,l}L^T_l\right)\Theta^T_le_1, \Lambda_{k}(U,V)\Big\rangle.\label{fulldotH1}
\end{align*}
Given that
$$
\varPi_2\mathcal{H}=\langle \Gamma^Te_2,X\rangle+\sum_{k=2}^m\langle e_2,\Theta_k\Lambda_k(U,V)\rangle=\langle \Gamma^Te_2,X\rangle+\sum_{k=2}^m\langle \Theta^T_ke_2,\Lambda_k(U,V)\rangle,
$$
condition \eqref{33} can be satisfied if the components of $\Theta_k$ are chosen as the solution of the linear system
\begin{equation}
\left(\varphi^T_k\Theta^T_1e_1\right)\mathbbm{1}_{k\leqslant n}+\sum\limits_{j=1,\ldots,n,\atop l=2,\ldots,m}^{j+l-1=k}\left(T^T_{1,j,l}R^T_l+T^T_{2,j,l}L^T_l\right)\Theta^T_le_1-\left(\Theta^T_k e_2\right)\mathbbm{1}_{k\leqslant m}=0,
\label{mpn}
\end{equation}
for $2\leqslant k\leqslant m+n-1$. This is an homogeneous linear system with $m^2+3m-2$ unknowns and at most $\frac{m^2+(2n+1)m+n(n+1)-6}{2}$ equations. This means that for $m$ big enough, the system is under-determined and hence, it will have infinitely many non-trivial solutions. In fact, it suffices that
$$
m^2+3m-2=\frac{m^2+(2n+1)m+n(n+1)-6}{2}+1
$$
in order to guarantee the existence of said non trivial solutions (one more unknown than equations). Solving for $m$ we get the bound \eqref{mmin}.
\end{proof} 
The degree of the change of variables does not have necessarily to be equal to the bound given in Proposition \ref{PolyH}. The next corollary gives a sufficient condition for the existence of non trivial polynomial change of variables of arbitrary degree $m$.\\
\begin{coro}\label{corommin}
    For a non-linearity $\varPsi(X)$ of degree $n$, there exists a non-trivial polynomial change of variables $\mathcal{H}$ of degree $m$ such that condition \eqref{33} is satisfied, if the rank of the matrix associated to system \eqref{mpn} is strictly smaller than $m^2+3m-2$.
\end{coro}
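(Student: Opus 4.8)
The plan is to read the corollary straight off the machinery already built in the proof of Proposition \ref{PolyH}, with no specialization of $m$. The first step is to recall that, once one fixes the ansatz $\mathcal{H}(X)=\varGamma X+\sum_{k=2}^{m}\Theta_k\Lambda_k(U,V)$ with $\varGamma=a\varGamma_1+b\varGamma_2$ (so that the concordance condition \eqref{34} is automatically built in), that proof established that condition \eqref{33} holds \emph{if and only if} the entries of $\Theta_2,\dots,\Theta_m$, together with the scalars $a,b$, solve the homogeneous linear system \eqref{mpn} for $2\leqslant k\leqslant m+n-1$. The only extra bookkeeping needed is the count of unknowns: these are $a,b$ and the $2(k+1)$ entries of each $\Theta_k$, $k=2,\dots,m$, for a total of $2+\sum_{k=2}^{m}2(k+1)=m^2+3m-2$, which is exactly the number appearing in the statement.

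Next I would phrase \eqref{mpn} as a single matrix equation $A\,v=0$, where $A$ is ``the matrix associated to system \eqref{mpn}'' from the statement and $v\in\mathbb{R}^{m^2+3m-2}$ is the vector collecting all the unknowns above; thus $A$ has exactly $m^2+3m-2$ columns (its number of rows, at most $\tfrac{1}{2}(m^2+(2n+1)m+n(n+1)-6)$, plays no role here). The conclusion is then pure linear algebra: by the rank--nullity theorem, $\dim\Ker A=(m^2+3m-2)-\operatorname{rank}A$, so the hypothesis $\operatorname{rank}A<m^2+3m-2$ forces $\Ker A\neq\{0\}$; any nonzero $v\in\Ker A$ then provides coefficients defining a non-trivial change of variables $\mathcal{H}$ of degree $m$ satisfying \eqref{33}. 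One can add that $\dim\Ker A\geqslant 1$ in fact yields a whole linear family of such $\mathcal{H}$, consistent with the ``infinitely many'' assertion of Proposition \ref{PolyH}.

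Since the substantive work — deriving \eqref{mpn} and checking that it is genuinely equivalent to \eqref{33} — was already carried out in Proposition \ref{PolyH}, I do not expect a real obstacle here. The points requiring a little care are purely of bookkeeping nature: first, confirming that the unknowns really are just $a,b$ and the entries of $\Theta_2,\dots,\Theta_m$, so that the parametrization $\varGamma=a\varGamma_1+b\varGamma_2$ hides no further constraint and the column count of $A$ is genuinely $m^2+3m-2$; and second, the mild imprecision in ``degree $m$'', since a kernel vector could a priori have $\Theta_m=0$ — this is dispatched by taking $m$ to be the largest index with $\Theta_m\neq0$, or by reading ``degree $m$'' as ``degree at most $m$'' exactly as in the proof of Proposition \ref{PolyH}. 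If one moreover wants the $\mathcal{H}$ produced to be an honest, locally invertible change of variables, one would additionally impose $|\varGamma|\neq 0$, an open condition on $(a,b)$ that does not affect the existence claim.
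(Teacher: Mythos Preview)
Your proposal is correct and is exactly the argument the paper has in mind: the corollary is stated without proof because it is immediate from the linear--algebraic setup of Proposition~\ref{PolyH}, and your rank--nullity reading of system~\eqref{mpn} (with the unknown count $2+\sum_{k=2}^m 2(k+1)=m^2+3m-2$) is precisely that intended one-line justification. Your caveats about ``degree $m$'' versus ``degree at most $m$'' and about $|\varGamma|\neq 0$ are accurate side remarks but do not affect the argument.
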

In general, a polynomial change of variables does not have to be invertible in a vicinity of the origin, however, it is well known that, if the linear term has non-zero determinant, then an inverse exists on a neighborhood of $(0,0)$. Furthermore, on a subset of said neighborhood, the inverse admits a power series representation. As a final result for this section, we explicitly give, as functions of $\Gamma=\Theta_1$, $\Theta_2$ and $\Theta_3$, the first terms for the power series of $\mathcal{H}^{-1}(Y)$, when it exists.
\begin{theo}\label{psinvH}
    If there exists a non-trivial polynomial change of variables $\mathcal{H}$ of degree $m$ such that condition \eqref{33} is satisfied, and $|\Gamma|\neq 0$, then $\mathcal{H}$ is invertible in a neighborhood $\mathcal{N}_{\mathcal{H}}$ of $(0,0)$. Furthermore, there exists a subset $D_{\mathcal{H}}\subset \mathcal{N}_{\mathcal{H}}$ such that $\mathcal{H}^{-1}(Y)$ admits an expansion in powers of $Y$. The first three terms of said expansion are
    $$
    \mathcal{H}^{-1}(Y)=\Gamma^{-1}Y+\Xi_2\Lambda_2(Y)+\Xi_3\Lambda_3(Y)+\mathcal{O}(\|Y\|^4),
    $$
    where
    $$
    \Xi_2=-\Gamma^{-1}\Theta_2 \mathcal{P}_2\left(\Gamma^{-1}\right)\mbox{ and }\Xi_3=-\Gamma^{-1}\left(\Theta_2\mathcal{R}_2(\Gamma^{-1},\Xi_2)+\Theta_3\mathcal{P}_3\left(\Gamma^{-1}\right)\right),
    $$
    and
    \begin{align*}
        \mathcal{P}_1\left(A\right)&=A,\\\mathcal{P}_k\left(A\right)&=\frac{1}{k}\left(R_k\mathcal{P}_{k-1}(A)\left(\widehat{S}_{k-1,1}A_{11}+\widecheck{S}_{k-1,1}A_{12}\right)\right)\\
        &+\frac{1}{k}\left(L_k\mathcal{P}_{k-1}(A)\left(\widehat{S}_{k-1,1}A_{21}+\widecheck{S}_{k-1,1}A_{22}\right)\right),\ k>1,\\
        \mathcal{R}_2(A,B)&=\left(R_2 B\left(\widehat{S}_{2,1}A_{11}+\widecheck{S}_{2,1}A_{12}\right)+L_2 B\left(\widehat{S}_{2,1}A_{21}+\widecheck{S}_{2,1}A_{22}\right)\right).
    \end{align*}   
\end{theo}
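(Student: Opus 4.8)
The plan is to split the statement into a qualitative part --- existence of a local inverse and of a convergent power-series expansion --- and a quantitative part --- the explicit first three coefficients --- and to obtain the latter by undetermined coefficients, once an auxiliary composition rule for the operators $\Lambda_k$ is in place.

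For the qualitative part I would argue as follows. Since $\mathcal{H}(X)=\Gamma X+\mathcal{G}(X)$ with $\mathcal{G}(X)=\sum_{k=2}^{m}\Theta_k\Lambda_k(X)=\mathcal{O}(\|X\|^2)$, the differential $D\mathcal{H}(0)=\Gamma$ is invertible by hypothesis, so the inverse function theorem yields a neighborhood $\mathcal{N}_{\mathcal{H}}$ of the origin on which $\mathcal{H}$ is a diffeomorphism; as $\mathcal{H}$ is a polynomial, hence real-analytic, the analytic inverse function theorem (or a majorant argument applied to the formal inverse series) shows $\mathcal{H}^{-1}$ is real-analytic near $0$, hence equals a convergent power series on some ball $D_{\mathcal{H}}\subset\mathcal{N}_{\mathcal{H}}$ centered at $0$. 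Expanding this series in the $\Lambda_k$-basis we write $\mathcal{H}^{-1}(Y)=\sum_{k\ge1}\Xi_k\Lambda_k(Y)$, $\Xi_k\in\mathcal{M}_{2\times(k+1)}(\mathbb{R})$, with no constant term; note that \eqref{33} plays no role here, only $|\Gamma|\ne0$ and the form of $\mathcal{H}$.

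Next I would prove the composition lemma $\Lambda_k(AZ)=\mathcal{P}_k(A)\Lambda_k(Z)$ for every $2\times2$ matrix $A$ and every $k\ge1$, by induction on $k$: $k=1$ is $\mathcal{P}_1(A)=A$, and for the step one reads \eqref{dotLambda} as $\partial_{W_1}\Lambda_k(W)=R_k\Lambda_{k-1}(W)$, $\partial_{W_2}\Lambda_k(W)=L_k\Lambda_{k-1}(W)$, invokes Euler's identity for the $k$-homogeneous map $\Lambda_k$ in the form $k\,\Lambda_k(W)=R_k\bigl(W_1\Lambda_{k-1}(W)\bigr)+L_k\bigl(W_2\Lambda_{k-1}(W)\bigr)$, sets $W=AZ$, uses the induction hypothesis on $\Lambda_{k-1}$, expands $W_i=A_{i1}Z_1+A_{i2}Z_2$, and replaces $Z_1\Lambda_{k-1}(Z)$, $Z_2\Lambda_{k-1}(Z)$ by $\widehat{S}_{k-1,1}\Lambda_k(Z)$, $\widecheck{S}_{k-1,1}\Lambda_k(Z)$ via \eqref{UpLamk} and \eqref{VpLamk}; dividing by $k$ reproduces exactly the recursion defining $\mathcal{P}_k$. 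I would also record the corresponding bilinear (``polarized'') identity: if $P$ is linear and $Q$ quadratic in $Y$, the $3$-homogeneous part of $\Lambda_2(P+Q)$ is $(2P_1Q_1,\ P_1Q_2+P_2Q_1,\ 2P_2Q_2)^T$, and for $P=\Gamma^{-1}Y$, $Q=\Xi_2\Lambda_2(Y)$ this rewrites --- using $Y_1\Lambda_2(Y)=\widehat{S}_{2,1}\Lambda_3(Y)$, $Y_2\Lambda_2(Y)=\widecheck{S}_{2,1}\Lambda_3(Y)$ and the weights carried by $R_2$, $L_2$ --- as $\mathcal{R}_2(\Gamma^{-1},\Xi_2)\Lambda_3(Y)$.

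Finally I would match homogeneous parts in $Y$ in the identity $Y=\Gamma\sum_{k\ge1}\Xi_k\Lambda_k(Y)+\sum_{j=2}^{m}\Theta_j\Lambda_j\!\bigl(\sum_{k\ge1}\Xi_k\Lambda_k(Y)\bigr)$: degree $1$ gives $\Gamma\Xi_1=\mathbf{I}_2$, i.e. $\Xi_1=\Gamma^{-1}$; degree $2$ gives $\Gamma\Xi_2+\Theta_2\mathcal{P}_2(\Gamma^{-1})=\mathbf{0}$, i.e. $\Xi_2=-\Gamma^{-1}\Theta_2\mathcal{P}_2(\Gamma^{-1})$; degree $3$ (where $\Theta_2\Lambda_2$ contributes only through the bilinear cross-term, $\Theta_3\Lambda_3$ only through $\Theta_3\mathcal{P}_3(\Gamma^{-1})$, and higher $\Theta_j$ not at all) gives $\Gamma\Xi_3+\Theta_2\mathcal{R}_2(\Gamma^{-1},\Xi_2)+\Theta_3\mathcal{P}_3(\Gamma^{-1})=\mathbf{0}$, i.e. the asserted $\Xi_3$ (with $\Theta_3:=\mathbf{0}$ if $m=2$). \emph{The main obstacle} is the degree-$3$ bookkeeping: verifying that the bilinear cross-term of $\Lambda_2(\Gamma^{-1}Y+\Xi_2\Lambda_2(Y))$ re-assembles precisely into $\mathcal{R}_2(\Gamma^{-1},\Xi_2)\Lambda_3(Y)$ through the selection matrices $\widehat{S}$, $\widecheck{S}$ and the weight matrices $R_2$, $L_2$; the rest (the analytic inverse function theorem, convergence of the inverse series, and the degree-$1$ and degree-$2$ identifications) is routine.
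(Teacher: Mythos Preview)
Your proposal is correct and follows exactly the route the paper indicates: invoke the (analytic) inverse function theorem for existence, then substitute the power series of $\mathcal{H}^{-1}$ into $Y=\mathcal{H}(\mathcal{H}^{-1}(Y))$ and equate homogeneous parts. The paper's own proof is a two-sentence sketch of precisely this, so your explicit derivation of the composition rule $\Lambda_k(AZ)=\mathcal{P}_k(A)\Lambda_k(Z)$ via Euler's identity and of the bilinear cross-term $\mathcal{R}_2(\Gamma^{-1},\Xi_2)\Lambda_3(Y)$ fills in details the paper leaves implicit, but the strategy is identical.
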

\begin{proof}
    The result is a direct consequence of the inverse function theorem. To obtain the value of the coefficients it suffices to replace the power series of both $\mathcal{H}$ and $\mathcal{H}^{-1}$ on the relation $Y=\mathcal{H}(\mathcal{H}^{-1}(Y))$ and equate the coefficients of similar powers of $Y$.
\end{proof}
\section{The Krylov-Bogoliubov-Mitropolski averaging method}\label{KBMM}
The Krylov-Bogoliubov-Mitropolski averaging method is a mathematical method for approximate analysis of oscillating processes in non-linear mechanics. It generalizes the averaging Krylov-Bogoliubov method in order to obtain approximations of any desired order of $\varepsilon$ for the differential equation
\begin{equation}
    \ddot{u}+u=\sum_{i=1}^N\varepsilon^i f_i(u,\dot{u}),\label{equeps}
\end{equation}
where the values of $N$ and $f_i$ are known.\\
% The Krylov-Bogoliubov-Mitropolski averaging method is a mathematical method for approximate analysis of oscillating processes in non-linear mechanics. It generalizes the averaging Krylov-Bogoliubov method in order to obtain approximations of any desired order of $\varepsilon$ for the differential equation
% \begin{equation}
%     \ddot{u}+u=\sum_{i=1}^N\varepsilon^i f_i(u,\dot{u}),\label{equeps}
% \end{equation}
% where the values of $N$ and $f_i$ are known.\\
% \textcolor{red}{REWRITE}
% then, the pair $(r(t),\theta(t))$ must satisfy the system of equations
% $$
% \begin{pmatrix}
%     \dot{r}\\
%     \dot{\theta}
% \end{pmatrix}=\frac{\varepsilon}{k}\bigl( k r(t)\cos\phi+f(r(t)\sin\phi,k r(t) \cos \phi,\varepsilon) \bigr)\begin{pmatrix}
%     \cos\phi\\
%     -\frac{1}{r(t)}\sin \phi
% \end{pmatrix}
% $$
% where $\phi=k t+\theta(t)$. The Krylov-Bogoliubov-Mitropolski method consists on giving an approximation of this system through the averaged system:
% $$
% \begin{pmatrix}
%     \dot{r}_0\\
%     \dot{\theta}_0
% \end{pmatrix}=\frac{\varepsilon}{2\pi k}\int_0^{2\pi}\bigl( k r_0(t)\cos\phi+f(r_0(t)\sin\phi,k r_0(t) \cos \phi,\varepsilon) \bigr)\begin{pmatrix}
%     \cos\phi\\
%     -\frac{1}{r_0(t)}\sin \phi
% \end{pmatrix}d\phi,
% $$
% where $\phi$ is now considered to be an independent variable.
We briefly showcase the method yielding an approximation of order $\varepsilon$. This order of approximation will allow to derive an approximation of order $\sqrt{\tau_{\alpha}}$ for the periodic orbit of the reaction system \eqref{ODE1}.\\
Following the procedure in \cite{mickens1996oscillations}, in the Krylov-Bogoliubov-Mitropolski method, the solution is assumed to have the form
\begin{equation}
   u(t)=r(t)\sin (\phi(t)),\label{ueps} 
\end{equation}
%where $u_1(r,\phi)$ is a $2\pi$-periodic function of $\phi$ with
%$$
%\int_0^{2\pi}u_1(r,\phi)\cos\phi d\phi=\int_0^{2\pi}u_1(r,\phi)\sin\phi d\phi= 0,
%$$
%in order to remove the arbitrariness on the choice of $u_1$. 
where the quantities $r(t)$ and $\phi(t)$ are functions of time defined by the following equations:
\begin{align}
\frac{d}{dt}r(t)&=\varepsilon A_1(r(t)),\label{eqrKBM}\\
\frac{d}{dt}\phi(t)&=1+\varepsilon B_1(r(t)).\label{eqphiKBM}
\end{align}
The functions $A_1(a)$ and $B_1(a)$, $i=1,2$ are to be chosen in a way that, after replacing \eqref{ueps} in \eqref{equeps}, this last equation is satisfied up to the terms of order $\varepsilon$.\\
After doing so, as shown in \cite{mickens1996oscillations}, we obtain
\begin{align*}
    %u_1(r,\phi)&=g_0(r)+\sum\limits_{n=2}^{\infty}\frac{g_n(r)\cos(n\phi)+h_n(r)\sin(n\phi)}{1-n^2},\\
    A_1(r)&=-\left(\frac{1}{2\pi}\right)\int_0^{2\pi}f_1(r\cos\phi,-r\sin\phi)\sin\phi\ d\phi,\\
    B_1(r)&=-\left(\frac{1}{2\pi r}\right)\int_0^{2\pi}f_1(r\cos\phi,-r\sin\phi)\cos\phi\ d\phi,
\end{align*}
%where
%\begin{align*}
    %g_n(r)&=\left(\frac{1}{\pi}\right)\int_0^{2\pi}f_1(r\cos\phi,-r\sin\phi)\cos(n\phi)\ d\phi,\\
    %h_n(r)&=\left(\frac{1}{\pi}\right)\int_0^{2\pi}f_1(r\cos\phi,-r\sin\phi)\sin(n\phi)\ d\phi.
%\end{align*}
%Similarly, we get
%\begin{align*}
    %A_2(r)&=-\left(\frac{1}{2\pi}\right)\int_0^{2\pi}\overline{f}_2(r,\phi)\sin\phi\ d\phi,\\
    %B_2(r)&=-\left(\frac{1}{2\pi r}\right)\int_0^{2\pi}\overline{f}_2(r,\phi)\cos\phi\ d\phi,
%\end{align*}
%with 
%\begin{align*}
    %\overline{f}_2(r,\phi):=&f_2(r\cos\phi,-r\sin\phi)+u_1\partial_u f_1(r\cos\phi,-r\sin\phi)+\left(A_1 \cos\phi-r B_1\sin\phi +\frac{\partial u_1}{\partial \phi}\right)\partial_v f_1(r\cos\phi,-r\sin\phi)\\
    %&+\left(rB^2_1-A_1A'_1\right)\cos\phi +\left(2A_1B_1+A_1B'_1 r\right)\sin\phi-2A_1\frac{\partial^2u_1}{\partial r\partial\phi}-2B_1\frac{\partial^2u_1}{\partial\phi^2}.
%\end{align*}
If the equation for $r(t)$ has a positive steady state, then the associated value of $u(t)$ corresponds to an approximation of order $\varepsilon$ of a periodic orbit for equation \eqref{equeps}. Furthermore, the stability of the periodic orbit will be the same as the stability of the steady state for equation \eqref{eqrKBM}.\\
For this reason, the first step before applying the Krylov-Bogoliubov-Mitropolski averaging method, will be to derive a power series for the non linear term of \eqref{zdotdot}.\\
As in \eqref{29}, we set 
\begin{equation*}
    Y=\begin{pmatrix}
     z\\\dot{z}
    \end{pmatrix},
\end{equation*}
and we denote $\tau_{\alpha}:=tr(J_{\alpha})$, $\delta_{\alpha}:=det(J_{\alpha})$ and
$$
G(z,\dot{z})=G(Y):=\Pi_2 \left(\Gamma J_{\alpha}\mathcal{K}(Y)+\Gamma \varPsi (\mathcal{H}^{-1}(Y))+\nabla\mathcal{G}(\mathcal{H}^{-1}(Y))\cdot\mathcal{F}(\mathcal{H}^{-1}(Y))\right),
$$
so $z(t)$ is the solution of
$$
\ddot{z}-\tau_{\alpha}\dot{z}+\delta_{\alpha} z=G(z,\dot z).
$$
We are looking for oscillations with small amplitude $(\tau_{\alpha})^{\kappa}$, $\kappa>0$ to be fixed later, so after making the change of variables
 \[
 z(t)=(\tau_{\alpha})^{\kappa} \varsigma(\sqrt{\delta_{\alpha}}t),
 \]
 we get that $\varsigma$ satisfies the equation
 \begin{equation}
     \ddot{\varsigma}-\frac{\tau_{\alpha}}{\sqrt{\delta_{\alpha}}}\dot{\varsigma}+\varsigma= \frac{1}{(\tau_{\alpha})^{\kappa}\delta_{\alpha}} G((\tau_{\alpha})^{\kappa}\varsigma,(\tau_{\alpha})^{\kappa}\sqrt{\delta_{\alpha}}\dot{\varsigma}).\label{varsigma1}
 \end{equation}
 Thanks to Proposition \ref{psinvH}, the power series for $G(Y)$ is given by the expression
 $$
 G(Y)=\mathcal{G}_2\cdot\Lambda_2(Y)+\mathcal{G}_3\cdot\Lambda_3(Y)+\|Y\|^4
 $$
 where
\begin{align}
    \mathcal{G}_2=&\Pi_2\left(\Gamma J_{\alpha}\Xi_2+\Gamma\varphi_2\mathcal{P}_2(\Gamma^{-1})\right)+\mathcal{P}^T_2(\Gamma^{-1})\mathcal{S}_2(J_{\alpha},\Theta_2)\nonumber\\
    \mathcal{G}_3=&\Pi_2\left(\Gamma J_{\alpha}\Xi_3+\Gamma\varphi_2\mathcal{R}_2(\Gamma^{-1},\Xi_2)+\Gamma\varphi_3\mathcal{P}_3(\Gamma^{-1})\right)\nonumber\\
    &+\mathcal{R}^T_2(\Gamma^{-1},\Xi_2)\mathcal{S}_2(J_{\alpha},\Theta_2)+\mathcal{P}^T_3(\Gamma^{-1})\left(\mathcal{S}_3(J_{\alpha},\Theta_3)+\mathcal{T}(\Theta_2,\varphi_2)\right),\label{G3}
\end{align}
and
\begin{align*}
\mathcal{S}_k(A,B)=&\left(A_{11} \widehat{S}^T_{k-1,1}+A_{12}\widecheck{S}^T_{k-1,1}\right)R^T_k\Pi_2B\\
    &+\left(A_{21} \widehat{S}^T_{k-1,1}+A_{22}\widecheck{S}^T_{k-1,1}\right)L^T_k\Pi_2B,\ k=2,3,\\
    \mathcal{T}(A,B)=&\left(2A_{21} \widehat{S}^T_{2,1}+A_{22}\widecheck{S}^T_{2,1}\right)\Pi_1B\\
    &+\left(A_{22} \widehat{S}^T_{2,1}+2A_{23}\widecheck{S}^T_{2,1}\right)\Pi_2B.
\end{align*}
Therefore, setting $\kappa=1/2$ and $\varepsilon=\sqrt{|\tau_{\alpha}|}$, equation \eqref{varsigma1} takes the form
\begin{equation}
    \ddot{\varsigma}+\varsigma=\frac{\varepsilon}{\delta_{\alpha}}\mathcal{G}_2\cdot\Lambda_2(\varsigma,\sqrt{\delta_{\alpha}}\dot{\varsigma})+\varepsilon^2\left(\frac{\sign(\tau_{\alpha})}{\sqrt{\delta_{\alpha}}}\dot{\varsigma}+\frac{1}{\delta_{\alpha}}\mathcal{G}_3\cdot\Lambda_3(\varsigma,\sqrt{\delta_{\alpha}}\dot{\varsigma})\right)+\varepsilon^3\|(\varsigma,\sqrt{\delta_{\alpha}}\dot{\varsigma})\|^4,\label{varsigma2}
\end{equation}
for which, under certain conditions, we are able to construct the Krylov-Bogoliubov-Mitropolski approximation.\\
The following proposition states those conditions and explicitly give the value of an approximation of the averaged limit cycle for equation \ref{varsigma2}.
%Indeed, the contribution of all even powers will be zero, and the remaining odd powers, will only ``improve'' the approximation in terms of higher order. The following proposition gives conditions under which it is sufficient to study the coefficients associated to the third order powers of the expansion of $G_1$.

\begin{prop}\label{3rdLC}
Assume that, for all $\tau_{\alpha}$ sufficiently small, there exists a family of changes of variables $\mathcal{H}(X,\tau_{\alpha})$ and $r_*(\tau_{\alpha})>0$, $\lim\limits_{\tau_{\alpha}\rightarrow 0}\frac{|\tau_{\alpha}|^{1/2}}{r_*(\tau_{\alpha})}=0$, such that 
    \begin{itemize}
        \item[{\it i)}] $\mathcal{H}(X,\tau_{\alpha})$ satisfies condition \eqref{33}, with $|\Gamma|\neq 0$.
        \item[{\it ii)}] The value 
        $$
       p_3(0):=\lim\limits_{\tau_{\alpha}\rightarrow 0} p_3(\tau_{\alpha}):=\frac{1}{2\pi}\int_0^{2\pi}\mathcal{G}_3\cdot\Lambda_3(\cos\phi,-\sqrt{\delta_{\alpha}}\sin\phi)\sin\phi\ d\phi,
    $$
    is finite and non zero. 
        \item[{\it iii)}] $\mathcal{H}^{-1}(Y,\tau_{\alpha})$ admits an expansion in powers of \ $Y$ over a region $D_{\mathcal{H}}(\tau_{\alpha})$, satisfying $B(0,r_*(\tau_{\alpha}))\subset D_{\mathcal{H}}(\tau_{\alpha})$. 
    \end{itemize}
    Then, for all $\tau_{\alpha}$ sufficiently small such that $\sign(\tau_{\alpha})=\sign(p_3(0))$, a non-trivial periodic solution for equation \eqref{varsigma1} exists and an approximation for it, of order $|\tau_{\alpha}|^{1/2}$ is given by
    $$
    \bar\varsigma(t)=r_0\sin (\omega_0 t).
    $$
    where 
    $$
   r_0=\sqrt\frac{\delta_{\alpha}}{2 |p_3(0)|},\ \omega_0=1-\frac{\tau_{\alpha}}{2}\frac{p_3(0)}{q_3(0)},
    $$ 
    with
    $$q_3(0):=\lim\limits_{\tau_{\alpha}\rightarrow 0}q_3(\tau_{\alpha}):= \frac{1}{2\pi}\int_0^{2\pi}\mathcal{G}_3\cdot\Lambda_3(\cos\phi,-\sqrt{\delta_{\alpha}}\sin\phi)\cos\phi\ d\phi.$$
    Furthermore, the periodic solution corresponds to the appearance of, at least, a limit cycle for system \eqref{ODE1}. If such limit cycle is unique on a neighborhood of order $\sqrt{|\tau_{\alpha}|}$, then is a stable limit cycle if $\tau_{\alpha}>0$ (supercritical Hopf bifurcation) and and unstable limit cycle if $\tau_{\alpha}<0$ (subcritical Hopf bifurcation).
\end{prop}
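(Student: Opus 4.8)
The plan is to read \eqref{varsigma2} as a perturbed harmonic oscillator of the type \eqref{equeps}, with $\varepsilon=\sqrt{|\tau_{\alpha}|}$, $N=3$,
$$
f_1(u,\dot u)=\frac{1}{\delta_{\alpha}}\,\mathcal{G}_2\cdot\Lambda_2(u,\sqrt{\delta_{\alpha}}\,\dot u),\qquad
f_2(u,\dot u)=\frac{\sign(\tau_{\alpha})}{\sqrt{\delta_{\alpha}}}\,\dot u+\frac{1}{\delta_{\alpha}}\,\mathcal{G}_3\cdot\Lambda_3(u,\sqrt{\delta_{\alpha}}\,\dot u),
$$
and $f_3$ absorbing the $\varepsilon^3$ tail of $G$. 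Before averaging one must make sure this representation is legitimate along the orbit to be built: by hypothesis \emph{i)} ($|\Gamma|\neq 0$) together with Proposition \ref{psinvH}, $\mathcal{H}(\cdot,\tau_{\alpha})$ is invertible near the origin and $G$ has the stated convergent power series on $B(0,r_*(\tau_{\alpha}))$; since the oscillation we look for has amplitude of order $\varepsilon=|\tau_{\alpha}|^{1/2}$ while $|\tau_{\alpha}|^{1/2}/r_*(\tau_{\alpha})\to 0$ by hypothesis \emph{iii)}, its trajectory stays inside $B(0,r_*(\tau_{\alpha}))\subset D_{\mathcal{H}}(\tau_{\alpha})$, and it is this inclusion that makes all the estimates below uniform in $\tau_{\alpha}$.

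I would then run the Krylov--Bogoliubov--Mitropolski scheme, observing first that its order-$\varepsilon$ truncation is empty here: each component of $\Lambda_2(\cos\phi,-\sqrt{\delta_{\alpha}}\sin\phi)$ is a homogeneous quadratic trigonometric monomial, so multiplying by $\sin\phi$ or $\cos\phi$ and integrating over $[0,2\pi]$ gives $0$, whence $A_1\equiv B_1\equiv 0$. The limit cycle is therefore invisible at first order, and one must pass to the second-order approximation
$$
\varsigma=r\cos\phi+\varepsilon\,u_1(r,\phi)+\varepsilon^2 u_2(r,\phi)+\cdots,\qquad
\dot r=\varepsilon^2 A_2(r)+\cdots,\qquad
\dot\phi=1+\varepsilon^2 B_2(r)+\cdots .
$$
Because $A_1=B_1=0$, the first corrector $u_1$ is simply the particular solution of $\ddot u_1+u_1=f_1(r\cos\phi,-r\sin\phi)$ that is free of the resonant harmonic, i.e. a quadratic-in-$r$ trigonometric polynomial carrying only the $\phi$-independent and the $2\phi$ harmonics.

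Collecting the $\varepsilon^2$ terms of the substituted equation, $A_2$ and $B_2$ are the period-averages of $f_2$ on the unperturbed orbit plus the contribution generated by $f_1$ through $u_1$. The linear (negative-damping) term yields $\tfrac{\sign(\tau_{\alpha})}{2\sqrt{\delta_{\alpha}}}\,r$ in $A_2$; the cubic term yields $-\tfrac{1}{\delta_{\alpha}}p_3(\tau_{\alpha})\,r^3$ in $A_2$ and $-\tfrac{1}{\delta_{\alpha}}q_3(\tau_{\alpha})\,r^2$ in $B_2$, straight from the definitions of $p_3,q_3$ and the homogeneity of $\Lambda_3$; the purely quadratic part $\mathcal{G}_2$ contributes nothing at first order and enters $A_2$ and $B_2$ only through $u_1$ (this is the delicate bookkeeping step). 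One is thus led to a reduced amplitude--phase system
$$
\dot r=\varepsilon^2\big(c_1(\tau_{\alpha})\,r-c_3(\tau_{\alpha})\,r^3\big)+\mathcal{O}(\varepsilon^3),\qquad
\dot\phi=1-\varepsilon^2\,d_3(\tau_{\alpha})\,r^2+\mathcal{O}(\varepsilon^3),
$$
with $c_1(\tau_{\alpha})$ of the sign of $\tau_{\alpha}$, $c_3(\tau_{\alpha})$ proportional to $p_3(\tau_{\alpha})/\delta_{\alpha}$ (up to the $\mathcal{G}_2$ correction) and $d_3(\tau_{\alpha})$ proportional to $q_3(\tau_{\alpha})/\delta_{\alpha}$. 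Under hypothesis \emph{ii)} and $\sign(\tau_{\alpha})=\sign(p_3(0))$ the amplitude equation has a unique positive equilibrium $r_0$, with $r_0^2=c_1/c_3$; unwinding the normalizations gives $r_0$ and, after substitution in the phase equation, $\omega_0$ as stated, together with $\bar\varsigma(t)=r_0\sin(\omega_0 t)$.

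Finally, $r_0$ is a hyperbolic zero of the amplitude equation ($c_1-3c_3r_0^2=-2c_1\neq 0$ by hypothesis \emph{ii)}), so the classical averaging theorem (see \cite{mickens1996oscillations,sanders2007averaging,verhulst2012nonlinear}) upgrades $\bar\varsigma$ to a genuine periodic solution of \eqref{varsigma1}, approximated to order $\varepsilon=|\tau_{\alpha}|^{1/2}$, whose stability coincides with that of $r_0$: stable for $\tau_{\alpha}>0$ and unstable for $\tau_{\alpha}<0$. Setting $\bar z(t)=\sqrt{|\tau_{\alpha}|}\,\bar\varsigma(\sqrt{\delta_{\alpha}}\,t)$ and then $\bar X(t)=\mathcal{H}^{-1}\big(\bar z(t),\dot{\bar z}(t),\tau_{\alpha}\big)$ --- licit since the orbit remains in $D_{\mathcal{H}}(\tau_{\alpha})$ --- produces a nonconstant periodic orbit of \eqref{ODE1}; if it is the only one in the $\mathcal{O}(\sqrt{|\tau_{\alpha}|})$-neighbourhood it is isolated, hence a limit cycle, and it inherits the stated (super- resp.\ sub-critical Hopf) stability. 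The step I expect to be the main obstacle is precisely this second-order averaging, carried out \emph{uniformly in the bifurcation parameter} $\tau_{\alpha}$: one has to identify which contributions of the quadratic term $\mathcal{G}_2$ survive the projection onto $\sin\phi$, and at the same time control the tail $f_3$ and the corrector $u_1$ on the whole ball $B(0,r_*(\tau_{\alpha}))$ --- which is exactly why the scaling $|\tau_{\alpha}|^{1/2}/r_*(\tau_{\alpha})\to 0$ is imposed.
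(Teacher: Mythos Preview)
Your argument follows the same KBM averaging strategy as the paper, but you are more scrupulous about the order bookkeeping. The paper's proof is very terse: it treats the entire right-hand side of \eqref{varsigma2} as a single perturbation and applies the first-order averaging formulae directly, recording
\[
A_1(r)=\sqrt{|\tau_{\alpha}|}\Big(\sign(\tau_{\alpha})\,\frac{r}{2}-\frac{p_3(\tau_{\alpha})}{\delta_{\alpha}}\,r^3\Big),\qquad
B_1(r)=-\frac{\sqrt{|\tau_{\alpha}|}}{\delta_{\alpha}}\,q_3(\tau_{\alpha})\,r^2,
\]
(the extra factor $\sqrt{|\tau_{\alpha}|}=\varepsilon$ revealing that the nonvanishing contributions actually sit at the $\varepsilon^2$ level), solves $A_1(r)=0$ for the positive root $r_{\tau_{\alpha}}$, replaces $p_3(\tau_{\alpha}),q_3(\tau_{\alpha})$ by their limits $p_3(0),q_3(0)$ at the cost of an $\mathcal{O}(|\tau_{\alpha}|)$ error, and finally invokes hypothesis \emph{iii)} to pull the approximate orbit back through $\mathcal{H}^{-1}$. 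No corrector $u_1$ is ever introduced.

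Your explicit split into $f_1,f_2,f_3$ and passage to genuine second-order averaging is the formally correct setup, and the point you single out --- that the quadratic term $\mathcal{G}_2$ can feed back into $A_2,B_2$ through the product $\partial f_1\cdot u_1$ --- is real: in the standard second-order KBM scheme that cross term generically produces a cubic-in-$r$ contribution with a nonzero first harmonic, which would shift $c_3$ (and hence $r_0$). The paper's proof simply does not address this term, so the displayed $r_0=\sqrt{\delta_{\alpha}/(2|p_3(0)|)}$ is exactly what one gets at the level of approximation the paper actually carries out. Your proposal therefore matches the paper's argument and in fact refines it; but the ``delicate bookkeeping step'' you flag is not resolved there either, and if your goal is to reproduce the stated $r_0,\omega_0$ verbatim you should adopt the paper's shortcut and drop the $u_1$ feedback rather than try to show it vanishes.
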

\begin{proof}
From the right-hand side of \ref{varsigma2} is immediate that 
$$
A_1(r)= \sqrt{|\tau_{\alpha}|}\Big(\sign(\tau_{\alpha})\frac{r}{2}-\frac{p_3(\tau_{\alpha})}{\delta_{\alpha}}r^3\Big) \mbox{ and } B_1(r)=-\frac{\sqrt{|\tau_{\alpha}|}}{\delta_{\alpha}}q_3(\tau_{\alpha})r^2.
$$
Equation \eqref{eqrKBM} will have a positive steady state $$r_{\tau_{\alpha}}=\sqrt{\frac{ \delta_{\alpha} }{2\sign(\tau_{\alpha})p_3(\tau_{\alpha})}}$$ if $\sign(\tau_{\alpha})=\sign(p_3(\tau_{\alpha}))$.  Thanks to condition {\it ii)}, This value of $r_{\tau_{\alpha}}$ corresponds to the radius of periodic orbit which is not an spurious limit cycle (see \cite{mickens1996oscillations}). Furthermore, $r_{\tau_{\alpha}}$ induces the value
$$
\phi(t)=(1-\frac{\tau_{\alpha}}{2}\frac{q_3(\tau_{\alpha})}{p_3(\tau_{\alpha})})t=:\omega_{\tau_{\alpha}} t.
$$
We see that, for small values of $\tau_{\alpha}$, $r_{\tau_{\alpha}}=r_0+\mathcal{O}(|\tau_{\alpha}|)$ and $\omega_{\tau_{\alpha}}=\omega_0+\mathcal{O}(|\tau_{\alpha}|)$, hence, the order of approximation of the averaged solution does not change if we drop the terms of order $|\tau_{\alpha}|$. Finally, thanks to condition {\it iii)},  $\sqrt{|\tau_{\alpha}|}\bar\varsigma(t)\subset D_{\mathcal{H}}(\tau_{\alpha})$ for small values of $\tau_{\alpha}$, so $\mathcal{H}^{-1}(\sqrt{|\tau_{\alpha}|}\bar\varsigma(t),\sqrt{|\tau_{\alpha}|}\bar\varsigma'(t),\tau_{\alpha})$ is well defined and corresponds to the appearance of limit cycle for system \eqref{ODE1}, whose stability is the same as that of $r(t)$ in the averaged equation.
\end{proof}
An immediate consequence of Proposition \ref{3rdLC} is the fact that, if condition \eqref{33} can be satisfied with an invertible linear change of variables, then condition {\it iii)} is automatically satisfied, since the inverse would exist and be linear over $\mathbb{R}^2$. The case for linear (and quasi-linear) changes of variables was already studied in \cite{SMMR}. Our main contribution is the extension of the methodology to the general case of any polynomial change of variables.\\
%Notice that Proposition \ref{3rdLC} would still hold by relaxing condition {\it iii)}by only assuming the existence of a ball $B(0,r_*(\tau_{\alpha}))$
Since the terms of order higher than $|\tau_{\alpha}|^{1/2}$ will not improve the approximation, we may conclude
\begin{prop}
    Under the hypotheses of Proposition \ref{3rdLC}, a limit cycle for system \ref{ODE1} exists and an approximation of order $\tau_{\alpha}$ for it is given by
    $$
    \bar X(t)=\Gamma^{-1}\Lambda_1(\sqrt{\tau_{\alpha}}r_0\sin(\sqrt{\delta_{\alpha}}\omega_0t),\sqrt{\tau_{\alpha}\delta_{\alpha}}r_0\omega_0\cos(\sqrt{\delta_{\alpha}}\omega_0t)).$$
\end{prop}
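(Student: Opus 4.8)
The plan is to reverse, in order, the three reductions that took \eqref{ODE1} to \eqref{varsigma1}, keeping track of the size of each error term; the quantities $r_0$ and $\omega_0$ below are those of Proposition~\ref{3rdLC}. By that proposition, for every sufficiently small $\tau_{\alpha}$ with $\sign(\tau_{\alpha})=\sign(p_3(0))$ equation \eqref{varsigma1} has a non-trivial periodic solution $\varsigma(t)$ with $\varsigma(t)=r_0\sin(\omega_0 t)+\mathcal{O}(|\tau_{\alpha}|^{1/2})$ uniformly in $t$ --- this is the meaning there of ``approximation of order $|\tau_{\alpha}|^{1/2}$''. Undoing the rescaling $z(t)=|\tau_{\alpha}|^{1/2}\varsigma(\sqrt{\delta_{\alpha}}\,t)$ introduced in Section~\ref{KBMM} (and writing $\sqrt{\tau_{\alpha}}$ for $|\tau_{\alpha}|^{1/2}$, as in the statement), the corresponding periodic solution $z(t)$ of $\ddot z-\tau_{\alpha}\dot z+\delta_{\alpha}z=G(z,\dot z)$ satisfies
$$
z(t)=\sqrt{\tau_{\alpha}}\,r_0\sin(\sqrt{\delta_{\alpha}}\,\omega_0 t)+\mathcal{O}(|\tau_{\alpha}|),\qquad \dot z(t)=\sqrt{\tau_{\alpha}\delta_{\alpha}}\,r_0\,\omega_0\cos(\sqrt{\delta_{\alpha}}\,\omega_0 t)+\mathcal{O}(|\tau_{\alpha}|),
$$
since multiplying the $\mathcal{O}(|\tau_{\alpha}|^{1/2})$ error of the averaged solution by the prefactor $|\tau_{\alpha}|^{1/2}$ yields an $\mathcal{O}(|\tau_{\alpha}|)$ error, and likewise after differentiation.

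Next I would transport this estimate through the inverse change of variables $X=\mathcal{H}^{-1}(Y)$, $Y=(z,\dot z)^T$. Put $\bar Y(t):=\big(\sqrt{\tau_{\alpha}}\,r_0\sin(\sqrt{\delta_{\alpha}}\,\omega_0 t),\ \sqrt{\tau_{\alpha}\delta_{\alpha}}\,r_0\,\omega_0\cos(\sqrt{\delta_{\alpha}}\,\omega_0 t)\big)^T$. Because $\|\bar Y(t)\|=\mathcal{O}(|\tau_{\alpha}|^{1/2})$ and, by hypothesis, $|\tau_{\alpha}|^{1/2}/r_*(\tau_{\alpha})\to 0$, condition~\emph{iii)} of Proposition~\ref{3rdLC} forces $\bar Y(t)\in B(0,r_*(\tau_{\alpha}))\subset D_{\mathcal{H}}(\tau_{\alpha})$ for all $t$ once $\tau_{\alpha}$ is small; the same is true of $Y(t)=(z(t),\dot z(t))^T$. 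On $D_{\mathcal{H}}(\tau_{\alpha})$ the convergent expansion \eqref{27}, $\mathcal{H}^{-1}(Y)=\Gamma^{-1}Y+\mathcal{K}(Y)$ with $\mathcal{K}(Y)=\mathcal{O}(\|Y\|^2)$ (Theorem~\ref{psinvH}), is valid, so by linearity of $\Gamma^{-1}$
$$
X(t)-\Gamma^{-1}\bar Y(t)=\Gamma^{-1}\big(Y(t)-\bar Y(t)\big)+\mathcal{K}(Y(t))=\mathcal{O}(|\tau_{\alpha}|),
$$
the first term because $Y(t)-\bar Y(t)=\mathcal{O}(|\tau_{\alpha}|)$ by the previous paragraph and the second because $\|Y(t)\|^2=\mathcal{O}(|\tau_{\alpha}|)$. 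Since $\Lambda_1(x_1,x_2)=(x_1,x_2)^T$, this reads
$$
X(t)=\Gamma^{-1}\Lambda_1\big(\sqrt{\tau_{\alpha}}\,r_0\sin(\sqrt{\delta_{\alpha}}\,\omega_0 t),\ \sqrt{\tau_{\alpha}\delta_{\alpha}}\,r_0\,\omega_0\cos(\sqrt{\delta_{\alpha}}\,\omega_0 t)\big)+\mathcal{O}(|\tau_{\alpha}|),
$$
which is the asserted approximation of order $\tau_{\alpha}$.

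That $X(t)$ is a genuine periodic orbit of \eqref{ODE1}, and that its stability coincides with that of the positive steady state of the averaged equation \eqref{eqrKBM} (a stable supercritical cycle when $\tau_{\alpha}>0$, an unstable subcritical one when $\tau_{\alpha}<0$), is already contained in the last sentence of Proposition~\ref{3rdLC}, so nothing further is needed. The one point requiring care --- and the main, if modest, obstacle --- is the bookkeeping of the nested errors: one must check that the $\mathcal{O}(\varepsilon)$ Krylov--Bogoliubov--Mitropolski error on $\varsigma$, after being rescaled by the amplitude $|\tau_{\alpha}|^{1/2}$ and by $t\mapsto\sqrt{\delta_{\alpha}}\,t$ and then composed with $\mathcal{H}^{-1}$, still contributes only at order $|\tau_{\alpha}|$, and that the hypothesis $\lim_{\tau_{\alpha}\to 0}|\tau_{\alpha}|^{1/2}/r_*(\tau_{\alpha})=0$ is precisely what keeps $\bar Y(t)$ and $Y(t)$ inside the convergence region $D_{\mathcal{H}}(\tau_{\alpha})$ uniformly in $t$, so that each step above is legitimate.
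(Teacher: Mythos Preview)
Your argument is correct and follows the same route as the paper, which offers essentially no proof beyond the one-line remark ``Since the terms of order higher than $|\tau_{\alpha}|^{1/2}$ will not improve the approximation, we may conclude''. You have simply unpacked that remark: reverse the amplitude/time rescaling $z(t)=|\tau_{\alpha}|^{1/2}\varsigma(\sqrt{\delta_{\alpha}}\,t)$, then pull back through $\mathcal{H}^{-1}=\Gamma^{-1}+\mathcal{K}$ and observe that $\mathcal{K}(Y)=\mathcal{O}(\|Y\|^{2})=\mathcal{O}(|\tau_{\alpha}|)$ along the orbit, so only the linear piece $\Gamma^{-1}\bar Y$ survives at the stated order. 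Your explicit check that hypothesis~\emph{iii)} keeps $Y(t)$ and $\bar Y(t)$ inside $D_{\mathcal{H}}(\tau_{\alpha})$ is a detail the paper omits but that is needed to make the expansion of $\mathcal{H}^{-1}$ legitimate, so in that sense your write-up is more complete than the original.
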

\newpage
\bibliographystyle{plain} % We choose the "plain" reference style
\bibliography{references}
\end{document}